\documentclass[12pt,leqno]{article}
%\linespread{1.6}
\pagestyle{plain}
\usepackage{amsmath, amsthm, amsfonts, amssymb, color}
\usepackage{mathrsfs}
\setlength{\topmargin}{0cm} \setlength{\oddsidemargin}{0cm}
\setlength{\evensidemargin}{0cm} \setlength{\textwidth}{16truecm}
\setlength{\textheight}{21truecm}

\newtheorem{thm}{Theorem}[section]
\newtheorem{cor}[thm]{Corollary}
\newtheorem{lem}[thm]{Lemma}

\theoremstyle{definition}

\newcommand{\scr}[1]{\mathscr #1}
\definecolor{wco}{rgb}{0.5,0.2,0.3}

\numberwithin{equation}{section} \theoremstyle{remark}

\newcommand{\ua}{\uparrow}

\title{{\bf Log-Harnack  Inequality for Stochastic Differential Equations in Hilbert Spaces and its Consequences}\footnote{Supported in
 part by WIMCS, NNSFC(10721091) and the 973-Project.}
}
\author{
{\bf Michael R\"ockner$^{b),c)}$ and Fneg-Yu Wang$^{a),d)}$\footnote{Corresponding author.
wangfy@bnu.edu.cn; F.Y.Wang@swansea.ac.uk}}\\
\footnotesize{$^{a)}$School of Math. Sci. \& Lab. Math. Com. Sys.,
Beijing Normal
University, Beijing 100875, China}\\
\footnotesize{$^{b)}$Department of Mathematics, Bielefeld
University, D-33501 Bielefeld, Germany}\\
\footnotesize{$^{c)}$Department of Mathematics and Statistics, Purdue University, W. Lafayette, IN 47907, USA}\\
\footnotesize{$^{d)}$Department of Mathematics, Swansea University,
Singleton Park, SA2 8PP, UK} }
\begin{document}
\def\R{\mathbb R}  \def\ff{\frac} \def\ss{\sqrt} \def\BB{\mathbb
B}
\def\N{\mathbb N} \def\kk{\kappa} \def\m{{\bf m}}
\def\dd{\delta} \def\DD{\Delta} \def\vv{\varepsilon} \def\rr{\rho}
\def\<{\langle} \def\>{\rangle} \def\GG{\Gamma} \def\gg{\gamma}
  \def\nn{\nabla} \def\pp{\partial} \def\tt{\tilde}
\def\d{\text{\rm{d}}} \def\bb{\beta} \def\aa{\alpha} \def\D{\scr D}
\def\EE{\mathbb E} \def\si{\sigma} \def\ess{\text{\rm{ess}}}
\def\Ric{\text{\rm{Ric}}} \def\Hess{\text{\rm{Hess}}}\def\B{\scr B}
\def\e{\text{\rm{e}}} \def\ua{\underline a} \def\OO{\Omega}  \def\oo{\omega}     \def\tt{\tilde} \def\Ric{\text{\rm{Ric}}}
\def\cut{\text{\rm{cut}}} \def\P{\mathbb P} \def\ifn{I_n(f^{\bigotimes n})}
\def\C{\scr C}      \def\aaa{\mathbf{r}}     \def\r{r}
\def\gap{\text{\rm{gap}}} \def\prr{\pi_{{\bf m},\varrho}}  \def\r{\mathbf r}
\def\Z{\mathbb Z} \def\vrr{\varrho} \def\ll{\lambda}
\def\L{\scr L}\def\Tt{\tt} \def\TT{\tt}\def\II{\mathbb I}
\def\i{{\rm i}}\def\Sect{{\rm Sect}}\def\E{\mathbb E} \def\H{\mathbb H}

\maketitle
\begin{abstract} A logarithmic type Harnack inequality  is established for the semigroup
of solutions to a stochastic differential equation in Hilbert spaces
with non-additive noise. As applications, the strong Feller property
as well as the entropy-cost inequality for the semigroup are derived
with respect to the corresponding distance (cost function).
\end{abstract} \noindent
 AMS subject Classification:\ 60J60, 58G32.   \\
\noindent
 Keywords:   Stochastic differential equation, log-Harnack inequality, strong Feller property, entropy-cost inequality.
 \vskip 2cm

\section{Introduction} Under a curvature condition the second named author established
the following type dimension-free Harnack inequality for diffusion semigroups on a Riemannian manifold $M$
(\cite{W97}):

$$(P_t f)^\aa (x)\le (P_t f^\aa)(x)\e^{c(t) \rr(x,y)^2},\ \ \ f\ge 0, t>0, \, \alpha >1,\, x,y\in M,$$
where $c(t)>0$ is explicitly determined by $\alpha$ and
the curvature lower
bound. This inequality has been efficiently applied to the study of
functional inequalities for the associated Dirichlet form, the
 hyper-/super-/ultracontractivity properties of the semigroup,
strong Feller property as well as estimates on the heat kernel of
the semigroup (cf. \cite{GW02, RW03, Wbook, DRW08} and references
therein). To establish this inequality for diffusions with curvature
unbounded below, a coupling method is developed in \cite{ATW}. This
method works also for infinite dimensional SPDE provided the noise
is additive and non-degenrate, see e.g. \cite{W07, Liu, LW, OY,
DRW08}  for Harnack inequalities for several different classes of SPDE. The
aim of this paper is to extend the study to stochastic differential
equations with non-additive noises.

Let us start from the following It\^o stochastic differential
equation on $\R^n$:

$$\d X_t = b(X_t)\d t+\si(X_t)\d B_t,$$
where $b: \R^n\to \R^n$ and $\si: \R^n\to \R^n\otimes \R^n$ are
continuous,  $B_t$ is the Brownian motion in $\R^n$. In this case
the solution is a diffusion process with corresponding generator
(Kolmogorov operator)

$$L= \sum_{i,j=1}^n a_{ij}(x) \pp_i\pp_j + \sum_{i=1}^n b_i(x) \pp_i,$$
where $(a_{ij})_{1\le i,j\le n}=\ff 1 2 \si^*\si$. If $a_{ij}$ and $b_i$ are regular enough
such that Bakry-Emery's $\GG_2$ condition (see \cite{BE})

\begin{equation}\label{G2}\GG_2(f,f):= \ff 1 2  L\<a\nn f,\nn f\> - \<a\nn f,
\nn Lf\>\ge -K \<a\nn f,\nn f\>\end{equation}
holds for all smooth
$f$ and some constant $K$, then the curvature condition used in
\cite{W97} holds for the Riemannian metric $\<u, v\>_a:= \<a^{-1} u,
v\>, u,v\in\R^n$, induced by the diffusion coefficient. Thus, one
derives the desired Harnack inequality for the associated diffusion
semigroup. Theoretically one may use this argument to
establish the Harnack inequality for non-constant $a$ also in
infinite dimensions. For $n \to \infty$
condition (\ref{G2})  is, however, too complicated to
verify or does not hold. This is the main reason why all existing
results in this direction for  infinite dimensional SDE are
merely proved for additive noise (i.e. for the constant diffusion case).

 In this paper we shall analyse the following log-Harnack inequality allowing
 the diffusion to be non-constant:
 $$P_t\log f(x)\le  \log P_t f(y) + \ff{K\rr_a(x,y)^2}{2(1-\e^{-2Kt})},\ \ \ t>0,
 x,y\in \R^n, f>0,$$
 where $\rr_a$ is the distance induced by the metric $\<\cdot,\cdot\>_a.$
 This inequality was first presented in the proof of \cite[Lemma 4.2]{BGL} under
 the $\GG_2$ condition (\ref{G2}) for $P_t f$ in place of $f$, which
 is crucial for the proof of the HWI inequality
 \cite[Theorem 4.3]{BGL}.
We will see  in
 Section 2  that this type of inequality can be derived by using a standard dissipative type condition
 which is explicit and dimension free.  Combining  this observation with an approximation argument,
 we are able to establish the inequality for infinite dimensional diffusions on Hilbert spaces, and furthermore
 derive the strong Feller property of the semigroup and entropy inequalities for the heat kernel.

 We will work with the following semi-linear stochastic differential equation
 on a separable Hilbert space $(\H, \<\cdot, \cdot\>, \|\cdot\|)$ (cf. \cite{DZ}):

\begin{equation}\label{1.2} \d X_t= \big(AX_t +F(X_t)\big)\d t + \si(X_t)\d
W_t,\end{equation} where $W_t$ is a cylindrical Brownian motion on
$\H$ on some filtered probability space $(\Omega, \mathcal F, \mathbb P, (\mathcal F_t))$.
$F$ is a Lipschtiz continuous function on $\H$, and $\si(x)=
\tilde \si_1(x)+ \tt\si_0$ for a linear operator $\tt\si_0$ and a
Hilbert-Schmidt operator-valued
 function $\tt\si_1$ such that $\si^*\si\ge \tt\si_0^2$. We shall assume:

 \begin{enumerate}
 \item[$(H1)$]  $A$ is a self-adjoint operator on $\mathbb H$
 generating a contractive compact semigroup
 $T_t$. In this case $-A$ has discrete spectrum $0\le  \ll_1\le \ll_2\le\cdots$ with
 corresponding eigenbasis $\{e_i\}_{i\ge 1}$ of $\H$.
 Let $\H_n={\rm span}\{e_1,\cdots, e_n\}, n\ge 1.$
 \item[ $(H2)$] \ $\tt\si_0 e_i=q_i e_i$ for a sequence $\{q_i>0:\ i\ge 1\}$ such that $\si^*\si\ge \tt\si_0^2$ and
 $\sum_{i=1}^\infty \ff{q_i^2}{1+\ll_i}<\infty.$
 \item[$(H3)$] $\tt\si_1:= \si-\tt\si_0$ is Hilbert-Schmidt and there exists a constant $C>0$ such that
 $$\|F(x)-F(y)\|+\|\tt\si_1(x)-\tt\si_1(y)\|_{HS}\le C\|x-y\|,\ \ \ x,y\in \H.$$ \item[ $(H4)$] There exists a
 constant $K\in \R$ such that
 $$2\<F(x)-F(y), \tt\si_0^{-2}(x-y)\>
  +\|\tt\si_0^{-1} (\tt\si_1(x)-\tt\si_1(y))\|_{HS}^2\le K\|\tt\si_0^{-1}(x-y)\|^2$$ holds for all
  $x,y\in \H$ with $x-y\in  \cup_{n=1}^\infty \H_n.$ \end{enumerate}

We note that $(H3)$ implies $(H4)$ in case $\tilde \si_0$ and $\tilde \si _0^{-1}$
are both bounded.
Obviously, $(H1)$--$(H3)$ imply the existence and the uniqueness of the mild solution to
(\ref{1.2}), that is, for any $x\in \H$ there exists a unique $\H$
valued adapted process $X_t$, which is continuous in $L^2 (\Omega, \mathbb P)$,
such that (cf. \cite{DZ})

$$X_t= T_t x+ \int_0^t T_{t-s} F(X_s)\d s+ \int_0^t T_{t-s} \si(X_s)\d W_s.$$ Let $P_t$ be the associated
Markov semigroup, i.e.

$$P_t f(x)= \E f(X_t),\ \ \ f\in \B_b(\H),$$ where $\B_b(\H)$ is the set of all bounded
measurable functions on $\H$.
In this paper  we shall establish a log-Harnack inequality for $P_t$ by using $(H4)$
in place of the $\GG_2$ condition.

\begin{thm}\label{T1.1} If $(H1)$-- $(H4)$ hold then for any strictly
positive $f\in \B_b(\H)$,

$$P_t \log f(x) \le \log P_t f(y)+ \ff{K\|\tt\si_0^{-1}(x-y)\|^2}{2(1-\e^{-Kt})},\ \ \
t>0, x,y\in\H,$$ where $\|\tt\si_0^{-1}x\|^2:= \sum_{i=1}^\infty
q_i^{-2} \<x,e_i\>^2\in [0,\infty].$  \end{thm}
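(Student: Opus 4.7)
I would use coupling-by-change-of-measure (Girsanov). On the finite-dimensional subspace $\H_n$, where $\si$ is invertible thanks to $\si^*\si\ge\tt\si_0^2>0$, I would construct a coupling that forces $Y_T=X_T$ and deduce the inequality for the Galerkin semigroup $P_T^{(n)}$; then I would pass to the limit $n\to\infty$ via convergence of the Galerkin solutions to the mild solution of \eqref{1.2} (the case $\|\tt\si_0^{-1}(x-y)\|^2=+\infty$ is trivial). Concretely, for $x,y\in\H$ with $x-y\in\H_n$, let $X_t$ be the projected solution from $x$ and take $Y_t$ starting at $y$ to be the solution of
\[
\d Y_t=\bigl(AY_t+F(Y_t)+c(t)(X_t-Y_t)\bigr)\d t+\si(Y_t)\,\d W_t,\qquad c(t):=\ff{K}{1-\e^{-K(T-t)}}.
\]

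An It\^o's formula computation for $\|\tt\si_0^{-1}Z_t\|^2$ with $Z_t:=X_t-Y_t$ uses three inputs: (H4) absorbs the $F$- and $\tt\si_1$-contributions into $K\|\tt\si_0^{-1}Z_t\|^2$; the simultaneous diagonalizability of $A$ and $\tt\si_0$ together with $A\le 0$ gives $\<\tt\si_0^{-2}Z_t,AZ_t\>\le 0$; and the added drift contributes $-2c(t)\|\tt\si_0^{-1}Z_t\|^2$. Writing $\gg_t:=2c(t)-K=K\coth(K(T-t)/2)$, taking expectations yields $\ff{\d}{\d t}\E\|\tt\si_0^{-1}Z_t\|^2\le -\gg_t\,\E\|\tt\si_0^{-1}Z_t\|^2$, and since $\int_0^T\gg_s\,\d s=+\infty$ this forces $Z_T=0$ a.s. Rewriting the added drift as $\si(Y_t)\eta_t$ with $\eta_t=c(t)\si(Y_t)^{-1}Z_t$, (H2) leads to a pointwise bound of the form $\|\eta_t\|^2\le c(t)^2\|\tt\si_0^{-1}Z_t\|^2$.

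Under the new probability measure $\mathbb Q=R\,\P$ with $R=\exp(-\int_0^T\<\eta_s,\d W_s\>-\ff12\int_0^T\|\eta_s\|^2\,\d s)$, the process $Y_t$ solves the original SDE from $y$ (by Girsanov), hence $P_Tf(y)=\E[R f(X_T)]$ since $X_T=Y_T$. Young's entropy inequality then gives $P_T\log f(x)=\E\log f(X_T)\le \log P_Tf(y)-\E\log R$, and since $-\E\log R=\ff12\E\int_0^T\|\eta_s\|^2\,\d s$, combining the bounds above with the explicit computation $\int_0^T c(s)^2\e^{-\int_0^s\gg_r\,\d r}\,\d s=K/(1-\e^{-KT})$ yields exactly $-\E\log R\le K\|\tt\si_0^{-1}(x-y)\|^2/(2(1-\e^{-KT}))$, as required. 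The main technical difficulty is that $c(t)$ blows up as $t\ua T$, so Novikov's condition for $R$ is not obvious; I would circumvent this by first running the coupling on $[0,T-\vv]$ (where all quantities are bounded) and then letting $\vv\da 0$, using continuity of $t\mapsto P_tf$ and the Markov property.
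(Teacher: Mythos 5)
Your strategy is genuinely different from the paper's at the core finite-dimensional step. The paper never invokes Girsanov: it first proves the gradient estimate $\|\tt\si_0\nn P_tf\|^2\le\e^{Kt}P_t\|\tt\si_0\nn f\|^2$ by a synchronous coupling (Lemma \ref{L2.2}), and then obtains the log-Harnack inequality by differentiating $s\mapsto P_s\log P_{t-s}f(x_s)$ along the curve $x_s=(x-y)h_s+y$ with $h_s=(1-\e^{-Ks})/(1-\e^{-Kt})$, using $\ff{\d}{\d s}P_s\log P_{t-s}f=-\ff12P_s\|\si\nn\log P_{t-s}f\|^2$ and completing the square. Your coupling-by-change-of-measure argument is the alternative route in the style of Arnaudon--Thalmaier--Wang, and your bookkeeping is correct: with $\gg_t=2c(t)-K$ one does get $\int_0^Tc(s)^2\e^{-\int_0^s\gg_r\,\d r}\d s=K/(1-\e^{-KT})$, so the constant matches, and the Galerkin reduction to $\H_n$ and back is the same in both proofs.

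The genuine gap is the one you flag but underestimate: with multiplicative noise, proving that $R$ is an honest probability density ($\E R=1$, so that Girsanov applies and $-\E\log R=\ff12\E\int_0^T\|\eta_s\|^2\d s$) is the crux, and truncating at $T-\vv$ does not dispose of it. Unlike the additive-noise case, where $Z_t$ is deterministic along the coupling, here $\|\tt\si_0^{-1}Z_t\|^2$ is genuinely random and you only control its first moment; Novikov's condition even on $[0,T-\vv]$ would require exponential integrability of $\int_0^{T-\vv}\|\tt\si_0^{-1}Z_s\|^2\d s$, which your estimates do not give. The standard repair is to localize with stopping times $\tau_k$ and deduce uniform integrability of $\{R_{\tau_k}\}$ from the uniform entropy bound $\E[R_{\tau_k}\log R_{\tau_k}]\le K\|x-y\|_0^2/(2(1-\e^{-KT}))$ itself; this works but must be written out, and one must also explain how the incomplete coupling at $T-\vv$ (where $X_{T-\vv}\ne Y_{T-\vv}$) yields the inequality at time $T$, e.g.\ by shifting the singularity of $c$ to $T+\vv$ and controlling $\E_{\mathbb Q}|\log f(X_T)-\log f(Y_T)|$ for Lipschitz $\log f$ before letting $\vv\da0$. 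A smaller point: your bound $\|\eta_t\|\le c(t)\|\tt\si_0^{-1}Z_t\|$ amounts to $(\si\si^*)^{-1}\le\tt\si_0^{-2}$, i.e.\ $\si\si^*\ge\tt\si_0^2$ rather than the stated $\si^*\si\ge\tt\si_0^2$; these differ for non-normal $\si$, so you should state which ellipticity you actually use (the paper's own argument needs the same lower bound on the diffusion matrix, so this is a shared ambiguity rather than a defect of your route alone). The paper's interpolation approach avoids all of the Girsanov integrability issues precisely because it only ever needs second moments of the synchronous coupling.
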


As applications of Theorem \ref{T1.1}, we have the following results on the strong Feller
property, heat kernel inequality and entropy-cost inequality.  To state these results, let us
introduce some notions. Let

$$\H_{0}=\{x\in \H: \|x\|_{0}:= \|\tt\si_0^{-1} x\|<\infty\}.$$
We call $P_t$ $\H_0$-strong Feller if for any $f\in \B_b(\H)$,

\begin{equation}\label{2.3a}
   \lim_{\|y-x\|_0\to 0 } P_t f(y)= P_t f(x),\ \ \ x\in\H.
\end{equation}

When $\tt\si_0^{-1}$ is bounded then $\H_0=\H$
and $\mathbb H_0$-strong Feller, implies $\mathbb H$-strong Feller. 
Next, let $P_t$ be $\H_0$-strong Feller
and let $\mu$ be a probability measure on $\mathbb H$ such that
for some $C$, $\alpha> 0$,
\begin{equation}\label{1.3a}
 \int P_t f \d\mu \leq C e^{\alpha t}   \int f \d \mu\quad \text{for all } f \in \B_b(\mathbb H), \; f \geq 0,
\end{equation}
(which holds e.g. if $\mu $ is $P_t$-invariant). Such measures always exist. Take e.g.
for $x_0 \in \mathbb H$, $\mu(d y):= \int _0^\infty e^{-s}P_s (x_0, dy)ds$.
Then $\mu$ satisfies \eqref{1.3a} with $\alpha = 1 = C$. Suppose that
$\mu$ is fully supported on $\H_0$, i.e. $\mu(U)>0$ for every nonempty
$\|\cdot\|_0$-open set $U \subset \mathbb H_0$.
  Then it is easy to see that for every $x \in \H_0$, 
  $P_t(x,dy)$ has a transition density $p_t(x,y)$ with respect to
 $\mu$.
 
\textbf{Remark.}  
 Obviously $(\mathbb H_0, \|\cdot \|_0)$ is separable. Hence there exists $\mu$ as in
 \eqref{1.3a} fully supported on $\mathbb H_0$. Indeed, take a countable
 $\|\cdot\|_0$-dense subset $\{x _n| n \in \N\}$ of $\mathbb H_0$.
 Then
 \[\mu(dy) := \sum_{n=1}^\infty \frac{1}{2^n} \int_0^\infty e^{-s}P_s (x_n, dy) \d s\]
 is a probability measure on $\mathbb H$, satisfying \eqref{1.3a}. Furthermore,
 if $U \subset \mathbb H_0$ is $\|\cdot\|_0$-open such that $\mu(U)=0$.
 Then for
 $\varphi(x) := \inf \{\|x - y\|_0 :\  y \in U^c\}$
 we have
 \[\int \varphi \d\mu =0.\]
 Hence by a diagonal argument we can find a zero sequence $(t_k)_{k\in \N}$
 such that $P_{t_k}\varphi (x_n) =0$ for all $k, n \in \N$. Taking
 $k \rightarrow \infty$ we obtain $\varphi(x_n) =0$ for all $n \in \N$.
 But if $U \neq \emptyset$, then $x_{n_0}\in U$ for some $n_0 \in \N$,
 so $\varphi (x_{n_0})>0$. This contradiction shows that $U= \emptyset$.\medskip

 Finally, for two probability measures $\mu_1,\mu_2$ on $\H$, let
 $W_0(\mu_1,\mu_2)$ be the $L^2$-Wasserstein distance or $L^2$-transportation cost between  them with respect to
 the cost function $(x,y)\mapsto \|x-y\|_0.$ More precisely, with
 $\C(\mu_1,\mu_2)$ denoting the set of all couplings of $\mu_1$ and $\mu_2$, we have

 $$W_0(\mu_1,\mu_2)^2 =\inf_{\pi\in\C(\mu_1,\mu_2)} \int_{\H\times\H} \|(\tt\si_0^{-1}
 (x-y)\|^2\pi(\d x,\d y).$$

 \begin{cor}\label{C1.2} Let $(H1)$-- $(H4)$ hold. Then: \begin{enumerate}
 \item[$(1)$]  For any $t>0$, $P_t$ is $\H_0$-strong Feller.
 Let $\mu$ be $P_t$-subinvariant $($i.e., \eqref{1.3a} holds
 with $C = 1, \; \alpha =0)$. Then \eqref{2.3a} holds for all $\mu$-exponentially
 integrable functions $f$.
 \item[ $(2)$]
 Let $\mu$ be as in \eqref{1.3a} above, fully supported on $\mathbb H_0$.
 Then for every $x \in \mathbb H_0$, $P_t(x, dy)$ has a transition density
 $p_t(x,y)$ satisfying the following entropy inequality
  $$\int_\H p_t(x,z)\log p_t(x,z) \mu(\d z) \le \log C+ \alpha t -\log \int_\H \exp
 \Big[-\ff{K\|x-y\|_0^2}{2(1-\e^{-Kt})}\Big]\mu(\d y),\ \ \ t>0, x\in \H.$$
  \item[$(3)$] Let $\mu$ be $P_t$-subinvariant. Then
  the following entropy-cost inequality holds for the adjoint
  operator $P_t^*$ of $P_t$
  in $L^2(\mu)$:
  $$\mu((P_t^*f)\log P_t^* f)\le \ff{K}{2(1-\e^{-Kt})} W_0(f\mu,\mu)^2,\ \ t>0,
   f\ge 0, \mu(f)=1.$$
%    Consequently, if  $K<0$ then $P_t$ has a unique invariant probability measure
%     supported on $\H_0$ and $P_t^*$  converges in entropy
%    to   $\mu$ exponentially fast.
   \end{enumerate} \end{cor}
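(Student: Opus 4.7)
\textbf{Proof strategy for Corollary \ref{C1.2}.}
Write $c(t):=\ff{K}{2(1-\e^{-Kt})}$ throughout. All three parts will be derived from Theorem \ref{T1.1} by standard dualities among entropy, Harnack-type inequalities and transportation cost.

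\emph{Part (1).} For $g\in\B_b(\H)$ and $\vv>0$, apply Theorem \ref{T1.1} to the strictly positive bounded function $\e^{\vv g}$ to obtain
$$\vv P_t g(x) \le \log P_t \e^{\vv g}(y) + c(t)\|\tt\si_0^{-1}(x-y)\|^2.$$
A Taylor expansion $\log P_t\e^{\vv g}(y) = \vv P_tg(y) + O(\vv^2\|g\|_\infty^2)$ (uniform in $y$), together with the analogous inequality for $-g$, after optimizing in $\vv$, yields
$$|P_t g(x) - P_t g(y)| \le 2\|g\|_\infty\ss{c(t)}\,\|\tt\si_0^{-1}(x-y)\| + O(\|\tt\si_0^{-1}(x-y)\|^2),$$
which gives the $\H_0$-strong Feller property. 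For $\mu$-exponentially integrable $f$ (i.e.\ $\int\e^{\ll|f|}\d\mu<\infty$ for some $\ll>0$), set $f_n:=(f\wedge n)\vee(-n)$. The entropy variational formula applied to the probability density $p_t(y,\cdot)$ together with the entropy bound of part (2) (with $C=1,\,\aa=0$) gives
$$|P_t(f-f_n)(y)| \le \ff{1}{\ll}\bigg(\int p_t(y,\cdot)\log p_t(y,\cdot)\,\d\mu + \log\int\e^{\ll|f-f_n|}\d\mu\bigg),$$
which vanishes as $n\to\infty$ uniformly on $\|\cdot\|_0$-neighborhoods of any fixed $x\in\H_0$; the continuity of each $P_t f_n$ then passes to $P_t f$.

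\emph{Part (2).} Substituting $\e^v$ for $f$ ($v\in\B_b(\H)$) in Theorem \ref{T1.1} and exponentiating gives
$$P_t\e^v(y) \ge \exp\!\big(P_t v(x)\big)\exp\!\big(-c(t)\|\tt\si_0^{-1}(x-y)\|^2\big).$$
Integrating in $y$ against $\mu$ and using \eqref{1.3a} produces
$$P_tv(x) \le \log C + \aa t + \log\int\e^v\d\mu - \log\int\exp\!\big(-c(t)\|x-y\|_0^2\big)\mu(\d y).$$
For $v$ with $\int\e^v\d\mu=1$, the entropy variational formula $\int\rho\log\rho\,\d\mu = \sup\{\int v\rho\,\d\mu:\int\e^v\d\mu=1\}$ applied to the probability density $\rho(z)=p_t(x,z)$ produces the claimed entropy bound.

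\emph{Part (3).} For $v\in\B_b(\H)$ and any coupling $\pi\in\C(f\mu,\mu)$, integrating the log-Harnack inequality (again with $f\mapsto\e^v$) against $\pi$ yields
$$\int(P_tv)\,f\,\d\mu \le \int\log P_t\e^v(y)\,\mu(\d y) + c(t)\int\|\tt\si_0^{-1}(x-y)\|^2\,\pi(\d x,\d y).$$
Jensen's inequality and the $P_t$-subinvariance of $\mu$ give $\int\log P_t\e^v\,\d\mu \le \log\int\e^v\,\d\mu$; rewriting the left-hand side as $\int v(P_t^*f)\,\d\mu$, taking the infimum over $\pi$ and (for $v$ with $\int\e^v\d\mu=1$) the supremum over $v$ via the entropy variational formula for $P_t^*f$ yields the desired entropy-cost inequality.

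The main technical point is the approximation step in part (1): uniform-in-$y$ control of $P_t|f-f_n|(y)$ on a $\|\cdot\|_0$-neighborhood of $x$ must be extracted from the entropy bound of part (2) combined with the integrability hypothesis on $f$. The other two parts are direct dualities once the log-Harnack inequality and the entropy variational formula are in hand.
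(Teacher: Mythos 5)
Your parts (2) and (3) are correct but run ``dual'' to the paper's argument: where you integrate the log-Harnack inequality applied to $\e^v$ against $\mu$ (resp.\ against a coupling $\pi$) and then take a supremum over $v$ with $\int\e^v\d\mu=1$ via the variational formula for entropy, the paper simply substitutes the optimal choice directly --- $f=n\land p_t(x,\cdot)$ in part (2) and $P_t^*f$ in place of $f$ in part (3) --- and reads the entropy off the left-hand side $P_t\log f$. The two routes are equivalent; yours avoids truncating the density but needs a word on the class of $v$ for which the variational formula is attained. Your first claim in part (1) is also fine and in fact quantitatively stronger than the paper's: the paper applies Theorem \ref{T1.1} to $1+\vv f$ and uses $r\le\log(1+r)+r^2$ to get matching $\limsup/\liminf$ bounds (continuity with no modulus), whereas your choice $f=\e^{\vv g}$ with optimization in $\vv$ yields an explicit $\|\cdot\|_0$-Lipschitz-type estimate.

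The genuine gap is in the second claim of part (1). Your bound
$$|P_t(f-f_n)(y)|\le\ff1\ll\Big(\int p_t(y,\cdot)\log p_t(y,\cdot)\,\d\mu+\log\int\e^{\ll|f-f_n|}\d\mu\Big)$$
does not vanish as $n\to\infty$: the relative-entropy term is independent of $n$ and strictly positive in general, so the right-hand side tends to $\ff1\ll\int p_t(y,\cdot)\log p_t(y,\cdot)\d\mu$, not to $0$; and you cannot send $\ll\to\infty$, since exponential integrability is assumed only for some fixed $\ll$. Moreover, the existence of the densities $p_t(y,\cdot)$ with respect to $\mu$ is obtained in the paper only under the additional hypothesis that $\mu$ is fully supported on $\H_0$, which is assumed in part (2) but not in part (1), where $\mu$ is merely subinvariant. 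A repair staying closer to the log-Harnack inequality itself is to apply Theorem \ref{T1.1} to $\e^{\ll((|f|\land N)-n)^+}$ and let $N\to\infty$, giving $\ll P_t(|f|-n)^+(y)\le\log P_t\e^{\ll(|f|-n)^+}(x_0)+c(t)\|x_0-y\|_0^2$ for a reference point $x_0$ with $P_t\e^{\ll|f|}(x_0)<\infty$; dominated convergence then kills the first term on the right, but one still needs such points $x_0$ to be $\|\cdot\|_0$-dense near $x$, so some further input is required. (In fairness, the paper itself dismisses this claim as ``an easy consequence'' without supplying a proof.)
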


   In Section 2 we shall prove the log-Harnack inequality for diffusion semigroups on $\R^n$ and
   then we extend this to an infinite dimensional setting in Section 3 by finite-dimensional approximations.
    Finally, Corollary \ref{C1.2} will be proved in Section 4.

   \section{Log-Harnack inequality on $\R^n$}
   Consider the following SDE on $\R^n$:

   \begin{equation}\label{2.1} \d X_t= b(X_t)\d t+\si(X_t)\d B_t,\end{equation} where $B_t$ is Brownian motion on $\R^n$,
   $b: \R^n\to \R^n$ and $\si: \R^n\to \R^n\otimes \R^n$ are locally Lipschitzian
   and of at most linear growth. Hence the equation has a unique
   strong solution, which is non-explosive.
   Let $\tt\si_0$ be a (strictly) positive definite
   symmetric matrix such that $\si^*\si\ge \tt\si_0^2$. Assume that

   \begin{equation}\label{2.2} \|(\tt\si_0^{-1} (\si(x)-\si(y)\|_{HS}^2+ 2\<\tt\si_0^{-1}
   (b(x)-b(y)), \tt\si_0^{-1}(x-y)\> \le K \|\tt\si_0^{-1}(x-y)\|^2,\ \ \ x,y\in \R^n\end{equation} holds for
   some constant $K\in \R$.

   \begin{thm}\label{T2.1} Assume $(\ref{2.2})$ and that the solution to
   $(\ref{2.1})$ is non-explosive. Then the associated Markov semigroup $P_t$ satisfies

   $$P_t \log f(x) \le \log P_t f(y)+ \ff{K\|\tt\si_0^{-1}(x-y)\|^2}{2(1-\e^{-Kt})},\ \ \
t>0, x,y\in\R^n$$  for all $f\in \B_b(\R^n),$ $f\geq 0$. \end{thm}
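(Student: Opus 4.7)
The plan is to apply Wang's coupling by change of measure, adapted to the non-constant diffusion setting.

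Let $X_t$ denote the unique strong solution of \eqref{2.1} starting from $x$ on some filtered probability space carrying the Brownian motion $B_t$. For a positive function $\ll:[0,T)\to(0,\infty)$ with $\int_0^T\ll(s)\,\d s=+\infty$ to be chosen, I construct a companion process $Y_t$ starting from $y$ by
\[ \d Y_t = b(Y_t)\,\d t + \si(Y_t)\,\d B_t + \ll(t)(X_t-Y_t)\,\d t,\qquad Y_0 = y, \]
so that the extra drift pulls $Y_t$ toward $X_t$. Applying It\^o's formula to $\|\tt\si_0^{-1}(X_t-Y_t)\|^2$ and invoking the dissipative assumption \eqref{2.2} yields, modulo a local martingale,
\[ \d\|\tt\si_0^{-1}(X_t-Y_t)\|^2 \le [K-2\ll(t)]\,\|\tt\si_0^{-1}(X_t-Y_t)\|^2\,\d t + \d(\text{mart}). \]
Taking expectations (after localization justified by non-explosion and linear growth of the coefficients) and applying Gronwall delivers $\E\|\tt\si_0^{-1}(X_t-Y_t)\|^2 \le \|\tt\si_0^{-1}(x-y)\|^2\exp\bigl(\int_0^t(K-2\ll(s))\,\d s\bigr)$, which together with $\int_0^T\ll=\infty$ forces $X_T = Y_T$ a.s.

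Define next $\xi_s := -\si(Y_s)^{-1}\ll(s)(X_s-Y_s)$ and $R_T := \exp\bigl(\int_0^T\<\xi_s,\d B_s\> - \ff{1}{2}\int_0^T\|\xi_s\|^2\,\d s\bigr)$. By Girsanov's theorem, under $\mathbb Q := R_T\P$ the extra drift is absorbed into a new $\mathbb Q$-Brownian motion, so that $Y_t$ solves \eqref{2.1} from $y$ under $\mathbb Q$; in particular $P_T f(y)=\E_{\mathbb Q} f(Y_T)$. The Donsker--Varadhan variational representation of relative entropy then yields
\[ \E_\P[\log f(Y_T)] \le \log\E_{\mathbb Q}[f(Y_T)] + H(\P\,\|\,\mathbb Q), \]
whose left-hand side equals $P_T\log f(x)$ (since $X_T=Y_T$ under $\P$) and whose entropy term equals $\ff{1}{2}\E\int_0^T\|\xi_s\|^2\,\d s$.

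Finally I optimize $\ll$. The assumption $\si^*\si\ge\tt\si_0^2$ gives $\|\si(Y)^{-1}z\|^2\le\|\tt\si_0^{-1}z\|^2$, so combined with the Gronwall bound the cost is dominated by $\|\tt\si_0^{-1}(x-y)\|^2\int_0^T\ll(s)^2\exp\bigl(\int_0^s(K-2\ll)\,\d u\bigr)\,\d s$. A short variational computation (the substitution $\psi = u^2$, where $\psi(t) := \exp(\int_0^t(K-2\ll)\,\d s)$, reduces the Euler--Lagrange equation to $u''=K^2u/4$ with $u(0)=1$, $u(T)=0$) identifies the minimizer $\ll(t)=K/(1-\e^{-K(T-t)})$ and minimum value $K/(1-\e^{-KT})$, exactly twice the theorem's constant. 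The main technical obstacle is that this $\ll(t)$ blows up as $t\to T$, so Novikov's condition fails on $[0,T]$; I handle this by a standard truncation, running the coupling and Girsanov on $[0,T-\vv]$ (where $\xi$ is bounded) with an $\vv$-modified constant, then sending $\vv\downarrow 0$ using the $L^2$-decay $\E\|\tt\si_0^{-1}(X_{T-\vv}-Y_{T-\vv})\|^2\to 0$ and continuity of $X_t$ to identify $Y_T=X_T$ in the limit and pass to the limit on both sides.
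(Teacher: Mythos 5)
Your proposal is a correct strategy, but it is a genuinely different route from the paper's. The paper never uses Girsanov: it first proves the gradient estimate $\|\tt\si_0\nn P_tf\|^2\le \e^{Kt}P_t\|\tt\si_0\nn f\|^2$ by a \emph{synchronous} coupling (same Brownian motion for both processes) together with \eqref{2.2}, then derives the identity $P_s\log P_{t-s}f-\log P_tf=-\ff12\int_0^sP_r\|\si\nn\log P_{t-r}f\|^2\d r$ from It\^o's formula, and finally integrates $\ff{\d}{\d s}P_s\log P_{t-s}f(x_s)$ along the interpolating path $x_s=(x-y)h_s+y$, optimizing over $h$ to get $h_s=(1-\e^{-Ks})/(1-\e^{-Kt})$ --- a Bakry--Ledoux-type semigroup interpolation. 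Your coupling-by-change-of-measure argument avoids the regularity of $P_tf$ that the interpolation needs (the paper has to smooth $b,\si$ so that $P_tC_b^2\subset C_b^2$), and your optimization of $\ll$ is correct: the substitution $\psi=u^2$ does reduce the cost functional to $\int_0^T(u'-Ku/2)^2$, with minimizer $\ll(t)=K/(1-\e^{-K(T-t)})$ and minimum $K/(1-\e^{-KT})$. In exchange, your route leans on the invertibility of $\si$ and on Girsanov, which the paper's proof never touches.

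Two points in your write-up need repair before the argument is complete. First, $\|\si(Y)^{-1}z\|^2=\<(\si\si^*)^{-1}z,z\>$, so the bound $\|\si(Y)^{-1}z\|\le\|\tt\si_0^{-1}z\|$ requires $\si\si^*\ge\tt\si_0^2$, whereas the standing hypothesis is $\si^*\si\ge\tt\si_0^2$; for non-normal $\si$ these differ, so you should either add the correct one-sided condition or take $\xi_s=-\ll(s)\si(Y_s)^*(\si\si^*)^{-1}(Y_s)(X_s-Y_s)$ and state explicitly which quadratic form you are controlling. Second, your truncation to $[0,T-\vv]$ only tames the blow-up of $\ll$ near $T$; it does not make $\xi_s$ bounded, because $\|X_s-Y_s\|$ is unbounded (you only control it in $L^2$), so Novikov still fails and $R_{T-\vv}$ is a priori only a supermartingale. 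The standard fix is a second localization in space, $\tau_n=\inf\{t:\|\tt\si_0^{-1}(X_t-Y_t)\|\ge n\}$, an application of Girsanov up to $\tau_n\wedge(T-\vv)$, and a proof that $\E R_{T-\vv}=1$ via a uniform-integrability or $\mathbb Q_n(\tau_n\le T-\vv)\to0$ argument; without this the identity $P_Tf(y)=\E_{\mathbb Q}f(Y_T)$ and the formula $H(\P\|\mathbb Q)=\ff12\E\int_0^T\|\xi_s\|^2\d s$ are not justified. Both gaps are fixable by standard means, but as written they are gaps.
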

By a standard approximation argument we can assume that $f \in C_b^\infty(\R^n)$.
Furthermore, we can approximate $b,\sigma$ by smooth $b_n, \sigma_n$ such that
the corresponding semigroups converge pointwise on $f\in C_b^\infty(\R^n)$ and
such that $P_tC_b^2 \subset C_b^2$ for all $t > 0$. \bigskip

To prove the log-Harnack inequality, we need the following gradient estimate on $P_t$.

\begin{lem}\label{L2.2} Under the assumptions of Theorem $\ref{T2.1}$ we have 

$$\|\tt\si_0 \nn P_t f\|^2(x)\le \e^{Kt} P_t \|\tt\si_0 \nn f\|^2(x),\ \ \ f\in C_b^1(\R^n), x\in\R^n.$$
\end{lem}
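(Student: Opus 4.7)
The plan is to establish this gradient estimate via the derivative flow of the SDE, combining an It\^o expansion of $\|\tt\si_0^{-1}\nn_v X_t\|^2$ with the infinitesimal form of (\ref{2.2}) and a final Cauchy--Schwarz step. As indicated in the paragraph preceding the lemma, by a standard approximation I may assume that $b$ and $\si$ are smooth with bounded derivatives, so that the flow $x\mapsto X_t^x$ is smooth; for $v\in\R^n$ the derivative $\eta_t:=\nn_v X_t^x$ then solves the linearised SDE
\[\d\eta_t=(\nn b)(X_t)\eta_t\,\d t+(\nn\si)(X_t)\eta_t\,\d B_t,\qquad \eta_0=v,\]
with the usual moment bounds.

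The first key step is to extract the infinitesimal form of (\ref{2.2}): plugging in $y=x+\vv w$, expanding to order $\vv^2$ and letting $\vv\downarrow 0$ gives
\[\|\tt\si_0^{-1}(\nn\si)(x)w\|_{HS}^2+2\<\tt\si_0^{-1}(\nn b)(x)w,\tt\si_0^{-1}w\>\le K\|\tt\si_0^{-1}w\|^2\]
for all $x,w\in\R^n$. Applying It\^o's formula to $\|\tt\si_0^{-1}\eta_t\|^2$ produces
\[\d\|\tt\si_0^{-1}\eta_t\|^2=\bigl\{2\<\tt\si_0^{-1}\eta_t,\tt\si_0^{-1}(\nn b)(X_t)\eta_t\>+\|\tt\si_0^{-1}(\nn\si)(X_t)\eta_t\|_{HS}^2\bigr\}\d t+\d M_t,\]
with $M_t$ a local martingale. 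Substituting $w=\eta_t$ in the infinitesimal inequality bounds the drift by $K\|\tt\si_0^{-1}\eta_t\|^2$; taking expectations (after localisation, if needed) and invoking Gronwall yields the crucial Jacobian estimate
\[\E\|\tt\si_0^{-1}\eta_t^v\|^2\le \e^{Kt}\|\tt\si_0^{-1}v\|^2,\qquad t\ge 0,\ v\in\R^n.\]

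To finish, differentiate $P_tf(x)=\E f(X_t^x)$ through the flow to obtain
\[\nn_v P_tf(x)=\E\<\nn f(X_t^x),\eta_t^v\>=\E\<\tt\si_0\nn f(X_t^x),\tt\si_0^{-1}\eta_t^v\>.\]
Cauchy--Schwarz together with the Jacobian estimate gives
\[|\nn_v P_tf(x)|^2\le \bigl(P_t\|\tt\si_0\nn f\|^2(x)\bigr)\cdot \e^{Kt}\|\tt\si_0^{-1}v\|^2.\]
Since $\tt\si_0$ is symmetric, the substitution $v=\tt\si_0 u$ identifies $\|\tt\si_0\nn P_tf(x)\|^2$ with $\sup\{|\nn_v P_tf(x)|^2:\|\tt\si_0^{-1}v\|=1\}$, and taking the supremum over such $v$ delivers the claim.

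The main technical obstacle I anticipate is justifying that the It\^o term $\int_0^t\d M_s$ has zero expectation, so that the drift bound transfers cleanly to $\E\|\tt\si_0^{-1}\eta_t\|^2$. Under the smooth, bounded-derivative approximations this is routine via stopping times $\tau_R:=\inf\{t:\|\eta_t\|\ge R\}$ together with the finite-moment bound on the Jacobian, followed by $R\to\infty$ using non-explosion of $X_t$. Once established at this smoothed level, the estimate transfers to the original $b,\si$ by the same approximation procedure that underlies the reduction mentioned before Lemma~\ref{L2.2}.
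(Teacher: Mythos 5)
Your proof is correct, but it takes a different route from the paper's. The paper does not linearize: it runs a synchronous coupling, i.e.\ two solutions $X_t, Y_t$ of (\ref{2.1}) started at $x$ and $y$ and driven by the same Brownian motion, applies It\^o's formula to $\|\tilde\sigma_0^{-1}(X_t-Y_t)\|^2$ so that (\ref{2.2}) can be used verbatim (no infinitesimal version needed), obtains $\mathbb{E}\|\tilde\sigma_0^{-1}(X_t-Y_t)\|^2\le \mathrm{e}^{Kt}\|\tilde\sigma_0^{-1}(x-y)\|^2$, and then gets the gradient bound from the difference quotient $\limsup_{y\to x}|P_tf(y)-P_tf(x)|^2/\|\tilde\sigma_0^{-1}(x-y)\|^2$ via Cauchy--Schwarz. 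Your derivative-flow argument is the infinitesimal shadow of this: the Jacobian estimate $\mathbb{E}\|\tilde\sigma_0^{-1}\eta_t^v\|^2\le \mathrm{e}^{Kt}\|\tilde\sigma_0^{-1}v\|^2$ plays exactly the role of the coupling contraction, and your Cauchy--Schwarz step mirrors theirs. The trade-offs: the coupling proof works directly for locally Lipschitz $b,\sigma$ and needs no differentiation of the flow, but its final $\limsup$ interchange (passing $\mathbb{E}\,|f(Y_t)-f(X_t)|^2/\|\tilde\sigma_0^{-1}(Y_t-X_t)\|^2$ to $\mathbb{E}\|\tilde\sigma_0\nabla f\|^2(X_t)$ as $y\to x$) is left somewhat informal; your version leans on the smoothing reduction stated before the lemma (so you must carry the estimate back through the approximation, as you note), but in exchange it gives genuine differentiability of $P_tf$ and a cleaner justification of each limit. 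One small point to keep explicit in your write-up: the infinitesimal form of (\ref{2.2}) is derived for the smoothed coefficients $b_n,\sigma_n$, so you should check (or arrange the approximation so) that these still satisfy (\ref{2.2}) with a constant $K_n\to K$; with that in place the argument is complete.
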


\begin{proof} For $x,y\in \R^n$, let $X_t$ and $Y_t$ be the solutions
to (\ref{2.1}) with $X_0=x$ and $Y_0=y$ respectively. By It\^o's
formula and (\ref{2.2}) we obtain

\begin{equation*}\begin{split} &\d \|\tt\si_0^{-1} (X_t-Y_t)\|^2 =
2\<\tt\si_0^{-1} (X_t-Y_t),
\tt\si_0^{-1} (\si(X_t)-\si(Y_t))\d B_t\> \\
&+ \big\{\|(\tt\si_0^{-1} (\si(X_t)-\si(Y_t))\|_{HS}^2+
2\<\tt\si_0^{-1}
   (b(X_t)-b(Y_t)), \tt\si_0^{-1}(X_t-Y_t)\>\big\}\d t\\
   &\le 2\<\tt\si_0^{-1} (X_t-Y_t),
\tt\si_0^{-1} (\si(X_t)-\si(Y_t))\d B_t\> +
K\|\tt\si_0^{-1}(X_t-Y_t)\|^2\d t.\end{split}\end{equation*} Since
the solution to (\ref{2.1}) is non-explosive, this implies

$$\E\|\tt\si_0^{-1}(X_t-Y_t)\|^2\le \e^{Kt} \|\tt\si_0^{-1}(x-y)\|^2.$$ Therefore,

\begin{equation*}\begin{split} \|\tt\si_0 \nn P_t f\|^2(x)
&=\limsup_{y\to x} \ff{|P_t f(y)-P_t f(x)|^2}{\|\tt\si_0^{-1}
(x-y)\|^2} = \limsup_{y\to x}\Big( \ff{\E(f(Y_t)-f(X_t))}{\|\tt\si_0^{-1}(x-y)\|}\Big)^2\\
&\le \limsup_{y\to x}
\Big(\E\ff{|f(Y_t)-f(X_t)|^2}{\|\tt\si_0^{-1}(Y_t-X_t)\|^2}\Big)
\ff{\E \|\tt\si_0^{-1} (X_t-Y_t)\|^2}{\|\tt\si_0^{-1}(x-y)\|^2}\\
& \le \e^{Kt} \E \|\tt\si_0\nn f\|^2(X_t).
\end{split}\end{equation*}
This implies  the desired gradient estimate. \end{proof}

\ \newline\emph{Proof of Theorem \ref{T2.1}.} We may assume $ f\ge 1.$
For fixed $x\in \R^n$, let $X_0=x.$ By It\^o's formula we have

\begin{equation*}\begin{split} \d\log P_{t-s}f(X_s) &= \<\nn\log P_{t-s}
f(X_s), \si(X_s)\d B_s\> + L\log P_{t-s} f(X_s) \d s -
\ff{LP_{t-s}f}{P_{t-s}f}(X_s)\d s\\
&= \<\nn\log P_{t-s} f(X_s), \si(X_s)\d B_s\> - \ff 1 2 \|\si\nn\log
P_{t-s}f\|^2(X_s)\d s.\end{split}\end{equation*} Letting

$$\tau_k=\inf\{t\ge 0:\ \|X_t\|\ge k\},\ \ \ k\ge 1,$$ we obtain

$$\E\log P_{t-s\land\tau_k}f(X_{s\land\tau_k})- P_t f(x) = -\ff 1 2
\E\int_0^{s\land \tau_k} \|\si \nn \log P_{t-r}f\|^2(X_r)\d r.$$
Since the process is non-explosive, we have $\tau_k\to\infty$. Thus,
due to the dominated convergence theorem, as $k\to\infty$ the
left-hand side goes to $P_s \log P_{t-s}f(x) - \log P_tf(x)$, while
by the monotone convergence theorem, the right-hand side goes to
$-\ff 1 2 \int_0^s P_r\|\si \nabla \log P_{t-r} f\|^2(x)\d r.$ So,
$\int_0^tP_r\|\si\nabla \log P_{t-r}f\|^2(x)\d r<\infty$  and

\begin{equation}\label{DD}
P_s\log P_{t-s}f(x) -\log P_tf(x)= -\ff 1 2 \int_0^s P_r\|\si\nn\log
P_{t-r}f\|^2(x)\d r,\ \ s\in [0,t].
\end{equation}
Now, for fixed $x,y\in \R^n, t>0$, let

$$x_s= (x-y) h_s+y,\ \ \ \ s\in [0,t],$$
 where $h\in C^1([0,t],\R)$ such that $h_0=0$ and $h_t=1.$
 By Lemma \ref{L2.2}, (\ref{DD}) and noting that $\si^*\si\ge \tt\si_0^2$,
 we have, since $s\mapsto P_s \log P_{t-s} f(x_s)$
 is absolutely continuous by Lemma \ref{L2.2}, that

 \begin{equation*}
 \begin{split}
 &P_t \log f (x) - \log P_t f (y) \\
 =&\int _0^t\ff{\d}{\d s} (P_s \log P_{t-s} f)(x_s)\d s\\
 =& -\ff 1 2 \int _0^t\left\{P_s \|\si\nn \log P_{t-s} f\|^2(x_s)
 +h_s' \<x-y, \nn P_s\log P_{t-s}f\>(x_s)\right\}\d s \\
 \le &-\ff 1 2 \int _0^t\left\{\e^{-Ks} \|\tt\si_0\nn P_s\log P_{t-s}f\|^2(x_s)
 +|h_s'|\cdot\|\tt\si_0^{-1}(x-y)\|\cdot\|\tt\si_0\nn P_s\log P_{t-s} f\|\right\}\d s\\
 \le& \ff {\|\tt\si_0^{-1}(x-y)\|^2} 2 \int _0^t\e^{Ks} |h_s'|^2  \d s.\end{split}\end{equation*}
 Letting

 $$h_s= \ff{1-\e^{-Ks}}{1-\e^{-Kt}},\ \ \ s\in [0,t],$$
 we complete the proof. \qed

   \section{Proof of Theorem \ref{T1.1}}

 For any $n\ge 1,$  let $\pi_n:
   \H\to \H_n:=\text{span}\{e_1,\cdots, e_n\}$  be the orthogonal projection.
    Let $W_t^n=\pi_n W_t, A_n= \pi_n A, \si_n=\pi_n \si, \si_{i,n}
   = \pi_n \tt\si_i (i=0,1),$ and $F_n= \pi_n F.$ By $(H1)$ and $(H2)$ we have

   \begin{equation}\label{3.1} A_n x=Ax,\ \ \si_{0,n} x= \tt\si_0 x,\ \ \ x\in \H_n.\end{equation} Consider
   the following SDE on $\H_n$:

   $$\d X_t^n =( A_n X_t^n +F_n(X_t^n))\d t +\si_n(X_t^n)\d W_t^n,\ \ \ X_0^n=\pi_n X_0.$$By $(H3)$
   we see that both $b_n(x):= A_n x+F_n(x)$ and $\si_n(x)$ are
   Lipschitzian
   in $x\in \H_n$. So, this equation has a unique solution.
   Let $P_t^n$ be the associated Markov semigroup.  Moreover, by $(H4)$,
   since $A\le 0$ and by (\ref{3.1}) we have

\begin{equation*}\begin{split} & 2\<\si_{0,n}^{-1} (b_n(x)-b_n(y)), \si_{0,n}^{-1}
(x-y)\> +\|\si_{0,n}^{-1} (\si_n(x)-\si_n(y))\|_{HS}^2\\
&\le 2\<\tt\si_0^{-1} (F(x)-F(y)), \tt\si_0^{-1}(x-y)\> + \|\tt\si_0^{-1}(\si(x)-\si(y))\|_{HS}^2\\
&\le K\|\tt\si_0^{-1}(x-y)\|^2,\ \ \ x,y\in
\H_n.\end{split}\end{equation*} Thus, Theorem \ref{T2.1} implies
that for $f\in C_b(\mathbb H_n)$

\begin{equation}\label{3.2}  P_t^n \log f(x) \le \log P_t^n f(y)+
\ff{K\|\tt\si_0^{-1}(x-y)\|^2}{2(1-\e^{-Kt})},\ \ \ t>0,
x,y\in\H^n.\end{equation} So, to derive the   inequality for $P_t$,
we need only to prove that

\begin{equation}\label{3.3} \lim_{n\to\infty} \E\|X_t^n-X_t\|^2=0,\ \ \ X_0=x\in
\bigcup _{n\geq 1}\H_n.\end{equation}
Indeed, this implies that for any Lipschitzian
function $f$ on $\H$, such that $f = f\circ \pi_m$ for some $m\in\N$

$$\lim_{n\to \infty} |P_tf(x)-P_t^nf(\pi_n x)|\le \|f\|_{Lip}\lim_{n\to\infty}
 \E\|X_t-X_t^n\|=0,\quad \forall x \in \bigcup_{n \geq 1}\mathbb H_n.$$
 Therefore, by letting $n\to\infty$ in (\ref{3.2}) we derive the desired
 log-Harnack inequality for such
 Lipschitzian functions first for $x \in \bigcup_{n\geq 1}\H _n$,
 but then since this set is dense in $\H$ and $P_t f$ is
 continuous for all such $f$, hence also for $\log (f+ \varepsilon )$,
 we obtain it for all $x \in \H$. Finally, we extend it
 for all $f\in \B_b(\H)$ by the monotone class theorem.

 In order to prove (\ref{3.3}), let

 $$Y_t= \int_0^t T_{t-s}\tt\si_0\d W_s,\ \ Y_t^n =\pi_n Y_t= \int_0^t \e^{(t-s)A_n} \si_{0,n}\d W_s^n,\ \ \ t>0.$$
 By $(H2)$ we have

 $$\sup_{s\in [0,t]}\E\|Y_s\|^2<\infty,$$ so that the dominated convergence theorem
 implies

 \begin{equation}\label{3.4}\lim_{n\to\infty}\E\|Y_t-Y_t^n\|^2=0,\ \ \  \lim_{n\to\infty} \int_0^t \E\|Y_s-Y_s^n\|^2\d s =0.
 \end{equation} Let $$Z_t= X_t-Y_t, \ \ \ \ Z_t^n= X_t^n-Y_t^n.$$
 By (\ref{3.4}) it suffices to prove

 \begin{equation}\label{3.5} \lim_{n\to\infty} \E\|Z_t-Z_t^n\|^2=0.\end{equation}
We have

 \begin{align}
 \label{3.6a}&\d Z_t= (AZ_t +F(Z_t+Y_t))\d t + \si_1(Z_t+Y_t)\d W_t,\\
 \label{3.7a}
 &d Z_t^n= (A_nZ_t^n +F_n(Z_t^n+Y_t^n))\d t + \si_{1,n}(Z_t^n+Y_t^n)\d W_t^n.
  \end{align}
 To be precise, \eqref{3.6a} is first meant in the mild sense. But by our
 assumptions it also has a unique variational solution (see e.g. \cite{10a}).
 Since both are analytically weak solutions and these are unique (see e.g.
 the recent paper \cite{2a}, where uniquenss of analytically weak solutions
 is proved for an even more general class of equations),
 we see that $Z_t$ defined above solves \eqref{3.6a} in the variational sense,
 so that It\^o's formula applies to $\|Z_t - Z_t^n\|^2$. Due to
 \eqref{3.1} we have
\begin{equation*}\begin{split}  \d(Z_t-Z_t^n) =
&\Big(A(Z_t-Z_t^n)+F(X_t)-F_n(X_t^n)\Big)\d t\\
&+ (\si_1(X_t)-\pi_n \si_1(X_t^n))\d W_t^n + \si_1(X_t) \d
(W_t-W_t^n).\end{split}\end{equation*}
 So, by I\^o's formula and $(H3)$ we obtain

 \begin{equation}\label{3.6}\begin{split}& \d \|Z_t-Z_t^n\|^2 \le C_1 \Big(\|F(X_t)-F_n(X_t^n)\|\cdot\|X_t-X_t^n\|\\
 &\qquad\qquad\qquad\qquad\qquad +
 \|\si_1(X_t)- \pi_n \si_1(X_t^n)\|_{HS}^2 +\sum_{i>n} \|\si_1(X_t)e_i\|^2\Big)\d t\\
  &\le C_2\Big(\|Z_t-Z_t^n\|^2 +\|Y_t-Y_t^n\|^2 +\|(1-\pi_n)F(X_t)\|^2+\sum_{i>n}
 \|\si_1(X_t)e_i\|^2\Big)\d t\end{split}\end{equation} for some constants $C_1, C_2>0.$
 Since by $(H3)$ we have

 $$\sup_{s\in [0,t]} \E \Big(\|F(X_s)\|^2 +\|\si_1(X_s)\|_{HS}^2\Big)\le C_3
  \sup_{s\in [0,t]}(1+\E\|X_s\|^2)<\infty,$$ by the dominated convergence theorem

  $$\vv_n:= C_2\E \int_0^t\Big(\|Y_s-Y_s^n\|^2 +\|(1-\pi_n)F(X_s)\|^2+\sum_{i>n}
 \|\si_1(X_s)e_i\|^2\Big)\d s\to 0$$ as $n\to\infty.$  Thus, it follows from (\ref{3.6}) that

 $$\lim_{n\to\infty} \E\e^{-C_2 t} \|Z_t-Z_t^n\|^2 \le \lim_{n\to\infty} \vv_n =0.$$
 Therefore, (\ref{3.5}) holds.
\hfill$\Box$

   \section{Proof of Corollary \ref{C1.2}}

 It is sufficient to prove \eqref{2.3a} for nonnegative $f\in \B_b(\H).$   Applying
 the log-Harnack inequality in Theorem \ref{T1.1} for
 $1+\vv f$ in place of $f$,  we obtain from the elementary inequality $r\leq \log (1 + r) + r^2 , \; r \geq 0,$

 $$P_t f(y)-\vv \|f\|_\infty^2 \le P_t \ff{\log (1+\vv f)}\vv (y)
   \le \ff 1 \vv \log (1+\vv P_t f(x)) + \ff{c_t \|x-y\|_0^2}\vv,\ \ \
 \vv>0, x,y\in \H,$$  where $c_t:= \ff{K}{2(1-\e^{-Kt})}.$
 Letting first $y\to x$ in $\|\cdot\|_0$ and then $\vv\to 0$, we obtain

 $$\limsup_{\|y-x\|_0\to 0} P_t f(y) \le P_t f(x).$$ Similarly, we have

 $$P_t\ff{\log (1+\vv f)}\vv (x) -\ff{c_t \|x-y\|_0^2}\vv
  \le \ff 1 \vv \log (1+\vv P_t f(y))  \le  P_t f(y).$$
  Letting first  $y\to x$ in $\|\cdot\|_0$ then $\vv\to 0$, we arrive at

  $$P_tf(x)\le \liminf_{\|x-y\|_0\to 0} P_t f(y).$$ Therefore, $P_tf$
  is $\|\cdot\|_0$ continuous. The second part of assertion \eqref{G2} is then
  an easy consequence.

  Now, let $p_t(x,y)$ be the transition density of $P_t$ with respect to $\mu$.
  By Theorem \ref{T1.1}, for any positive $f\in \B_b(\H)$ we have

 $$ \e^{P_t\log f(x)}\le \exp\bigg[\ff{K\|x-y\|_0^2}{2(1-\e^{-Kt})} \bigg]P_t f(y),\ \ \ x,y\in \H.$$
 Thus,

 $$\e^{P_t\log f(x)} \int_\H \exp\bigg[-\ff{K\|x-y\|_0^2}{2(1-\e^{-Kt})} \bigg]\mu(\d y)
 \le \int_\H P_t f(y)\mu(\d y)=C \e^{\alpha t}\mu(f).$$
    For
  fixed $x\in\H$,  applying this inequality  to $f= n\land p_t(x, \cdot)$
  then letting $n\to\infty$, we obtain

  $$\e^{\int_\H p_t(x,z)\log p_t (x,z)\mu(\d z)}\int_\H \exp\bigg[-\ff{K\|x-y\|_0^2}{2(1-\e^{-Kt})} \bigg]\mu(\d y)\le C \e^{\alpha t}.$$
  This implies (2).

  By approximations it remains to prove (3) for bounded
 positive $f$ with $\mu(f)=1.$ By Theorem \ref{T1.1} for $P_t^*f$ in place of $f$,
 we obtain

 $$P_t \log P_t^* f(x)\le \log P_tP_t^* f(y) + \ff{K\|x-y\|_0^2}{2(1-\e^{-Kt})},\ \ \ x,y\in \H.$$
 So, for any $\pi\in \C(f\mu,\mu)$, integrating both sides with respect to $\pi(\d x,\d y)$
 we arrive at

 $$\mu((P_t^* f)\log P_t^*)\le \mu(\log P_tP_t^*f) +\ff{K }{2(1-\e^{-Kt})}\int_{\H\times\H}
 \|x-y\|_0^2\pi(\d x,\d y).$$
Since  Jensen's inequality implies

$$ \mu(\log P_tP_t^*f) \le \log \mu(P_tP_t^*f)=\log\mu(f)=0,$$this implies the desired
entropy-cost inequality.

\end{document}